\documentclass{article}

\usepackage{amssymb}
\usepackage{amsmath}
\usepackage{amsthm}

\newtheorem{theorem}{Theorem}
\newtheorem{lemma}{Lemma}[section]

\newtheorem{claim}{Claim}[section]
\theoremstyle{definition}

\newtheorem{defin}{Definition}[section]
\newtheorem{remark}{Remark}[section]


\newcommand{\la}{\lambda}

\newcommand{\fii}{\varphi}


\newcommand{\R}{\mathbb{R}}

\newcommand{\cS}{\mathcal{S}}
\newcommand{\cL}{\mathcal{L}}
\newcommand{\cD}{\mathcal{D}}

\sloppy
\begin{document}

\title{Nodal geometry of graphs on surfaces}

\author{Yong Lin, G\'abor Lippner, Dan Mangoubi, Shing-Tung Yau}
\date{April 14, 2010}

\maketitle

\begin{abstract}   
We prove two mixed versions of the Discrete Nodal Theorem of Davies
et.\ al.\ ~\cite{Davies} for bounded degree graphs, and for three-connected graphs of fixed genus
$g$. Using this we can show that for a three-connected graph
satisfying a certain volume-growth condition, the multiplicity of
the $n$th Laplacian eigenvalue is at most $2\left[
  6(n-1) + 15(2g-2) \right]^2$. Our results hold for any Schr\"odinger operator, not just the Laplacian.
\end{abstract}

\section{Introduction}

Let $G(V,E)$ be a finite connected graph. We denote by $x \sim y$ that $(xy) \in
E$. The degree of a vertex $v$ will be denoted by $\deg(v)$. The
Laplace operator associated to $G$ is a linear operator $\Delta : \R^V
\to \R^V$ given by $\Delta f (x) = \sum_{x \sim y} f(x) - f(y)$ for
any function $f \in \R^V$. We shall consider the more general class of Schr\"odinger operators.
Let $M = (m_{xy})_{x,y\in V}$ be any symmetric matrix satisfying $m_{xy} < 0$ if $x \sim y$ and $m_{xy} = 0$ otherwise. The diagonal entries $m_{xx}$ can be arbitrary. We denote again by $\Delta : \R^V \to \R^V$ the operator given by $\Delta f(x) = \sum_y m_{xy} f(y)$. 
Let us denoted the eigenvalues of $\Delta$ by $\la_1 <
\la_2 \leq \la_3 \leq \dots \leq \la_{|V|}$, and an eigenfunction
corresponding to $\la_i$ by $u^{(i)}$. (By the Perron-Frobenius theorem the multiplicity of $\la_1$ is 1, since $G$ is connected.) 

Let us fix an eigenfunction $u = u^{(n)}$. The vertices where $u$
vanishes are usually referred to as \textit{nodes}. A strong nodal
domain for $u$ is a maximal connected induced subgraph $D \leq G$ on
which $u$ is either strictly positive or strictly negative. Let
$D_1,D_2,\dots, D_t$ be the list of strong nodal domains. Davies et al
show in \cite{Davies} that $t \leq n + r - 1$ where $r$ is the
multiplicity of $\la_n$. We are interested in an upper bound for $t$
that involves the genus of the graph instead of the multiplicity of
$\la_n$. 

\begin{theorem}~\label{nodaldomainthm}
If the maximum degree is $d$ in $G$ then $t \leq d\cdot (n-1)$.
If the graph is 3-connected and $g$ denotes its genus then $t \leq 6(n -1) + 14(2g-2)$.
\end{theorem}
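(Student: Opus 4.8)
The plan is to carry out the test-function argument underlying the Davies et al.\ estimate, but to pull out of it a purely combinatorial defect in place of the multiplicity $r$ of $\la_n$, and then to control that defect by the maximum degree, resp.\ by the genus. \textbf{Set-up.} For each strong nodal domain $D_i$ let $\psi_i\in\R^V$ agree with $u=u^{(n)}$ on $D_i$ and vanish off $D_i$; the supports are disjoint, so $\langle\psi_i,\psi_j\rangle=0$ for $i\ne j$. Put $c_{ij}:=\langle\Delta\psi_i,\psi_j\rangle=\sum_{x\in D_i,\,y\in D_j,\,x\sim y}m_{xy}u(x)u(y)$; since $m_{xy}<0$ on edges while $u(x)u(y)<0$ across a domain boundary, $c_{ij}\ge 0$, with $c_{ij}>0$ exactly when some edge of $G$ joins $D_i$ to $D_j$. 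Using $\Delta u=\la_n u$ one finds $\langle\Delta\psi_i,\psi_i\rangle-\la_n\|\psi_i\|^2=-\sum_{j\ne i}c_{ij}$ (the terms pointing into the zero set $Z=\{x:u(x)=0\}$ vanish, and the diagonal entries $m_{xx}$ drop out, so this works for any Schr\"odinger operator), whence for $\phi=\sum_i a_i\psi_i$
\[\langle\Delta\phi,\phi\rangle-\la_n\|\phi\|^2=\sum_i a_i^2\bigl(\langle\Delta\psi_i,\psi_i\rangle-\la_n\|\psi_i\|^2\bigr)+2\sum_{i<j}a_ia_j c_{ij}=-\sum_{i<j}c_{ij}(a_i-a_j)^2\le 0 .\]

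\textbf{A mixed nodal inequality.} The Rayleigh quotient is therefore $\le\la_n$ on the $t$-dimensional space $\mathrm{span}\{\psi_i\}$, with equality only when $(a_i)$ is constant on each connected component of the graph $\hat G$ whose vertices are $D_1,\dots,D_t$ and whose edges are the pairs with $c_{ij}>0$ — equivalently, on each connected component of $G$ after the zero set $Z$ is deleted and each surviving component contracted. Writing $k$ for the number of these components, the equality locus is a $k$-dimensional subspace, so on a transverse $(t-k)$-dimensional subspace the quotient is strictly below $\la_n$; Courant--Fischer then forces $\la_{t-k}<\la_n$, that is $t\le(n-1)+k$. (If $u$ has no zeros then $\hat G$ is connected, $k=1$, and one recovers Courant's $t\le n$; the gain over Davies is that the excess is governed by the geometric quantity $k$ rather than by the multiplicity of $\la_n$.) What remains is to pass from this inequality to the sharper $t\le d(n-1)$ in the bounded-degree case and to $t\le 6(n-1)+14(2g-2)$ in the $3$-connected genus-$g$ case, by controlling $k$ — or by sharpening the same quadratic-form argument — combinatorially.

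\textbf{Controlling the defect.} In the bounded-degree case one wants $k$ (or directly $t$) bounded so as to give $t\le d(n-1)$; the key is that a degree-$d$ graph cannot fall apart into many pieces relative to $n$, which I would establish by feeding the eigenfunction equation back along $Z$ rather than merely counting components, so as to avoid circularity. In the $3$-connected, genus-$g$ case I would form the bipartite graph $\bar G$ on $\{\text{components of }G\setminus Z\}\sqcup\{\text{connected clusters of }Z\}$; it is a minor of $G$, hence it embeds in $\Sigma_g$. After suppressing every vertex of degree $\le 2$ — three-connectedness is what keeps the result loopless and cellularly embedded with minimum degree $\ge 3$ — one applies Euler's inequality $|E|\le 3|V|+3(2g-2)$ together with a spectral bound on the number of essential clusters of $Z$; this should pin $k$ (hence $t$) down to the asserted size. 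The degenerate range of small $n$, in particular $n=1$ where the stated bounds are vacuous, is handled separately, e.g.\ by the Davies et al.\ bound $t\le n+r-1$.

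\textbf{Main obstacle.} The spectral half is routine bookkeeping; the hard part is the combinatorial half. A priori $k$ is unrelated to $n$, so the entire burden falls on showing that the zero set $Z$ — how many components it separates $G$ into, and how many connected clusters it forms — is itself constrained by the index $n$ via $\Delta u=\la_n u$, and then on marrying this to the topology of the embedding without the two inputs running in a circle. Getting the precise constants $d(n-1)$ and $6(n-1)+14(2g-2)$ out of the Euler count, and verifying that three-connectedness genuinely controls the suppression of low-degree vertices in $\bar G$, is where the real work lies.
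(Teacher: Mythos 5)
Your spectral set-up is sound and coincides with the easy half of the paper's argument (the Duval--Reiner identity, $c_{ij}\ge 0$, and the conclusion that on the span of the $\psi_i$ the Rayleigh quotient is $\le\la_n$ with equality exactly when the coefficient vector is constant on each connected component of the domain-adjacency graph). But the inequality you extract from it, $t\le (n-1)+k$ with $k$ the number of such components (the paper's ``regions''), is genuinely too weak, and your plan for the remaining ``combinatorial half'' cannot be completed in the form you sketch. The extremal example is exactly the one the paper flags: for the star on $N+1$ vertices, an eigenfunction of $\la_2$ has $t=N$ strong domains, no two of which are adjacent, so every region is a singleton and $k=t$; your inequality reads $t\le n-1+t$ and says nothing, yet this is the configuration that saturates the bound $t\le d(n-1)$. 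More generally $k$ is \emph{not} controlled by $n$, $d$ and $g$ alone through counting components or through an Euler count on a bipartite minor: the whole difficulty is concentrated in the case of many singleton regions, and there the only available information is the eigenvalue equation at the zero vertices, which your proposal invokes only as a hope (``feeding the eigenfunction equation back along $Z$'') without carrying it out. You yourself identify this as the main obstacle; it is precisely the new content of the theorem, not routine bookkeeping.

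What the paper does instead is to keep the test-function argument but change \emph{which} linear constraints are imposed. Regions are merged into ``islands'' using nodes adjacent to exactly two islands, and to every node $v$ adjacent to a small (single-domain) island one attaches the vector $\fii_v\in\R^{\cD}$, $\fii_v(D)=\frac{1}{t(I(D))}\sum_{w\in I(D)}m_{vw}u(w)$. The equality analysis shows that the coefficient vector $c$ is constant on islands (this uses $\Delta f=\la_n f$ and $\Delta u=\la_n u$ at the merging nodes), hence lies in the space $W$ of island-constant vectors, and moreover that $c\perp W_0:=\langle\fii_v\rangle$, again by evaluating $\Delta f(v)=0$ at nodes. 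Therefore one only needs to impose, besides orthogonality to $u^{(1)},\dots,u^{(n-1)}$, a number $y=\dim W-\dim W_0$ of extra conditions to force $c=0$, giving $t\le n-1+y$. The combinatorial work then bounds $y$, not $k$: a greedy choice of nodes hitting fresh small islands shows $\dim W_0\ge|\cS|/d$ in the bounded-degree case, and in the $3$-connected case a bipartite graph between nodes and contracted small islands (where $3$-connectedness forces each island-vertex to have degree $\ge 3$), together with an Euler-formula argument on $\Sigma_g$, gives $\dim W_0\ge\frac{1}{6}(|\cS|-14(2g-2))$; combined with $t\ge 2|\cL|+|\cS|$ this yields $y\le\frac{d-1}{d}t$, respectively $y\le\frac{5}{6}t+\frac{14}{6}(2g-2)$, and hence the stated bounds. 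So the missing idea in your proposal is this island/$\fii_v$ mechanism that converts the eigenvalue equation at the zero set into additional usable linear relations; without it, the reduction to ``control $k$'' is a dead end.
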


\begin{remark}
\begin{itemize}
\item It has been observed in \cite{Davies} that the star-graph on $N+1$ vertices behaves badly in terms of these type of questions. It has only three different eigenvalues: $\la_1 = 0, \la_2= \dots = \la_N =  1$ and $\la_{N+1} = N+1$. Furthermore any eigenfunction for $\la_2$ has exactly $N$ strong nodal domains. This shows that the first statement of Theorem~\ref{nodaldomainthm} is sharp.

\item The double-star $K_{2,N}$ has similar properties: 1 is an eigenvalue of multiplicity $N$, and any eigenfunction has $N$ strong nodal domains, while the genus is still 0. This shows that 3-connectedness is essential if we want an upper bound depending only on $n$ and $g$ in the second statement. 

\item One might then think that  the triple star $K_{3,N}$ could be a 3-connected counter-example. It is not, however, since its genus becomes suddenly large.

\end{itemize}
\end{remark}

Cheng~\cite{Cheng} proved that on a smooth surface of genus $g$ the
multiplicity of $\la_n$ is bounded by $(n+2g+1)(n+2g+2)/2$. The idea of his
proof is to use the high multiplicity to obtain an eigenfunction which
vanishes to a high order. This function will have a lot of sign
changes near this zero, and hence it will have many nodal domains. But
the number of nodal domains is limited by Courant's original nodal
domain theorem.  
Using our discrete version of the nodal domain theorem we can adapt
Cheng's approach for the graph case. However an extra assumption is
needed for our graph. 

\begin{defin}\label{volumegrowthdef}
A graph $G$ satisfies the quadratic volume-growth condition
$VG$ if for any $D \subset V$ such that $|D| \leq |V|/2$ we
have $|\partial D| \geq \sqrt{|D|}$. Here $\partial D$ denotes the outer
vertex-boundary of $D$, that is, those vertices of $V \setminus D$
that are adjacent to $D$.
\end{defin}

\begin{theorem}\label{multiplicityboundthm}
If $G$ is a 3-connected graph that satisfies $VG$ then the
multiplicity of $\la_n$ is at most $2\left[
  6(n-1) + 15(2g-2) \right]^2$ where $g$ is
the genus of $G$.
\end{theorem}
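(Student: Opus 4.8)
The plan is to adapt Cheng's argument, using our discrete nodal theorem (Theorem~\ref{nodaldomainthm}, second part) in place of Courant's. Write $D := 6(n-1) + 14(2g-2)$ for the resulting bound on the number of strong nodal domains of an eigenfunction on a $3$-connected genus-$g$ graph, and suppose for contradiction that the multiplicity $r := \dim\ker(\Delta - \la_n)$ exceeds $2[6(n-1)+15(2g-2)]^2$. It then suffices to produce a single eigenfunction of $\la_n$ with more than $D$ strong nodal domains.

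First I would produce a ``flat'' eigenfunction. For any vertex $x_0$ and any radius $\rho$ with $|B(x_0,\rho)| \le r-1$, a dimension count yields a nonzero $u$ in the $\la_n$-eigenspace vanishing on all of $B(x_0,\rho)$, and I want $\rho$ as large as possible. Here the volume-growth hypothesis enters: iterating $|\partial D| \ge \sqrt{|D|}$ shows a ball of radius $\rho$ (of size at most $|V|/2$) has at least on the order of $\rho^2$ vertices, which is what will make the radius worth $\Omega(\rho)$ domains later; but I also need an \emph{upper} bound, namely a center $x_0$ around which balls grow no faster than in a grid, $|B(x_0,\rho)|\lesssim 2\rho^2$ up to lower-order and genus corrections. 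Locating such a center is where $VG$ must be combined with $3$-connectedness and the genus bound -- the wheel graph and $K_{2,N}$, which fail $VG$, show that without this the relevant balls degenerate. The upshot is $u\neq 0$ vanishing on $B(x_0,\rho)$ with $\rho \gtrsim \sqrt{(r-1)/2}$, hence, by the contradiction hypothesis, $\rho > D$.

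Next comes the heart of the matter: forcing $\gtrsim\rho$ strong nodal domains. The key local fact is a discrete mean-value principle: for $x$ on the sphere $S_\rho(x_0)$ the equation $\Delta u(x)=\la_n u(x)$ collapses, all nearer values of $u$ being zero, to $\sum_{y\sim x,\, y\in S_{\rho+1}} m_{xy}u(y) = 0$; since every $m_{xy}<0$, this is impossible if $u$ has a single sign on $S_{\rho+1}$ without vanishing there, so $u$ must already oscillate on the first sphere outside the ball. Peeling spheres outward and repeating this at each of the $\rho$ layers forces an oscillation at every radius; the real work is to show these oscillations are carried by $\gtrsim\rho$ \emph{distinct} strong nodal domains rather than by a bounded number of long ones. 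This is again where $VG$ is essential: by $VG$ the spheres are wide, so any domain meeting many layers must have many vertices, and since $\sum_i|D_i|\le|V|$ only boundedly many domains can be that large, so the remaining oscillations must be realized by new domains. Counting, $u$ has more than $\rho>D$ strong nodal domains, contradicting Theorem~\ref{nodaldomainthm}; the bound $r\le 2[\,\cdot\,]^2$ is precisely the square of this radius-to-domain conversion, with the $15$ in place of $14$ absorbing the genus error terms from both steps.

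The step I expect to be the main obstacle is the claim that ``one forced sign change per sphere'' genuinely produces on the order of $\rho$ separate nodal domains: a priori one positive and one negative domain, each weaving radially outward, could account for every forced oscillation, and ruling this out needs the amortized bookkeeping sketched above, balancing sphere widths from $VG$ against the budget $\sum_i|D_i|\le|V|$ and against the genus/planarity constraints on how domains can nest around the ball. A secondary, genuinely nontrivial point is Step~1's existence of a grid-like center under $VG$ together with $3$-connectedness and the genus bound, and one must separately dispose of the degenerate case where the zero set or a single nodal domain already contains more than half the vertices.
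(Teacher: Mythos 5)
There is a genuine gap, and it sits exactly where you predicted: the conversion of the large vanishing set into many \emph{distinct} strong nodal domains. Your sphere-peeling argument only forces a sign change near each layer, and nothing in the proposed bookkeeping rules out that a single positive and a single negative domain, each of linear size, weave outward and absorb every forced oscillation: the inequality $\sum_i |D_i| \le |V|$ bounds the number of \emph{huge} domains, but two domains of size about $|V|/2$ are perfectly compatible with it, so the ``amortized'' count never gets off the ground. The paper resolves this not volumetrically but topologically: it passes to the dual graph of $G$ in $\Sigma_g$, records the sign of $u$ on each face, deletes edges separating equal signs, and contracts the faces coming from nodes. After this reduction every face corresponds to one strong nodal domain, every vertex has even degree at least $4$ (adjacent domains alternate in sign), and the vertex obtained by contracting the zero-component $W$ has degree at least $|Z|$, where $Z$ is the inner vertex boundary of $W$. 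Euler's formula on a surface of genus at most $g$ then gives at least $|Z|+1-2g$ faces, i.e.\ at least that many strong nodal domains, which Theorem~\ref{nodaldomainthm} caps by $6(n-1)+14(2g-2)$. This Euler-formula step is the heart of the proof and is absent from your sketch; without some such use of the embedding the statement does not follow, since combinatorially a long zero boundary can indeed be surrounded by just two domains.

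Your Step 1 also overreaches and, as stated, can fail: you require a center $x_0$ with a grid-like \emph{upper} bound $|B(x_0,\rho)| \lesssim 2\rho^2$, but $VG$ is a lower bound on boundaries and gives no upper bound on ball growth (a $3$-connected planar graph can have uniformly exponential growth and still satisfy $VG$, and then no such center exists). The paper avoids the issue entirely: it chooses an arbitrary connected set $W'$ with $|W'|=r/2$, finds by dimension counting an eigenfunction $u$ vanishing on $W'$ but nonzero on at least $r/2$ of the $r$ witness vertices (so that the complement of the zero-component is large), and then applies $VG$ once, to $W\setminus Z$ or to $V\setminus W$, to conclude $|Z|\ge\sqrt{r/2}-1$. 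So no radius, no spheres, and no upper bound on growth are needed; the only quantitative input from $VG$ is that a zero set containing $r/2$ vertices has a boundary of size at least about $\sqrt{r/2}$, and the surface argument converts that boundary, not a radius, into nodal domains.
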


\begin{remark}
As the volume growth condition is used only at the very last step of the proof, it could be easly replaced by alternative versions, yielding sligtly different bounds in Theorem~\ref{multiplicityboundthm}.
\end{remark}

\section{Nodal geography}

Let us fix our graph $G$. Let $\la_n$
be the $n$-th eigenvalue of the Laplacian, and let $u = u^{(n)}$ be an
eigenfunction for $\la_n$. We may assume without loss of generality
that $\la_{n-1} < \la_n$, and fix pairwise orthogonal eigenfunctions
$u^{(1)},\dots, u^{(n-1)}$ corresponding to $\la_1,\dots,\la_{n-1}$. 

Let $\cD = \{D_1,D_2,\dots, D_t\}$ be the set of strong nodal domains of $u$. We start
by analyzing the relative location of these domains. We say that two
domains $D_1,D_2$ are adjacent if there is an edge $v_1 \sim v_2$ such
that $v_1 \in D_1$ and $v_2 \in D_2$. This of course implies that the
sign of $u$ on $D_1$ is different from that on $D_2$. This defines a
graph on the set of domains. 

Let us take any connected component of this graph, and take the union
of the corresponding domains. We shall call this a (nodal)
\textit{region} of $u$. Each region consist of one or more domains. It is
clear from the definition, that any vertex in the boundary of a region is a node.
We call a region \textit{small} if it consist of a single strong domain. Otherwise we call it \textit{large}.

We shall group the regions into larger compounds which we call (nodal)
\textit{islands} of $u$. Similarly to regions, we are going to
distinguish between \textit{small} islands - meaning they consist of a
single strong domain - and \textit{large} islands, which contain more
than one strong domain. The construction of islands is done
recursively. At the beginning each region is an island on its own
(either small or large, depending on the type of the region). In one
step we look for a node which is adjacent to exactly two different
islands, at least one of which has to be a small island, and unite
these two islands into one big island. (The result is then neccessarily a large island.) We repeat this step as long as
there are islands to unite. Let $I_1,I_2,\dots,I_s$ denote the final
list of islands. The set of small islands will be denoted by $\cS$ and
the set of large islands by $\cL$. The number of strong domains in an
island $I$ shall be denoted by $t(I)$.

\begin{claim}
Any node adjacent to a small island has to be adjacent to at least 3 different islands.
\end{claim}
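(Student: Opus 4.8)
The plan is to combine the eigenvalue equation at the node with the stopping condition of the island-merging procedure. First I would fix notation: let $I$ be a small island from the final list, so that $I$ consists of a single strong domain $D$, on which we may assume $u$ is strictly positive; and let $v$ be a node with $v \sim w$ for some $w \in D$. The goal is to exhibit two further islands adjacent to $v$ besides $I$, and the second of these will come ``for free'' from the structure, while the third will come from the fact that the construction has terminated.

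The key step is the eigenvalue identity at $v$. Since $v$ is a node, $\Delta u(v) = \la_n u(v) = 0$, and because the diagonal term $m_{vv} u(v)$ vanishes, this reads $\sum_{y \sim v} m_{vy} u(y) = 0$. Every neighbour $y$ of $v$ that lies in $D$ contributes a strictly negative term $m_{vy} u(y)$, as $m_{vy} < 0$ and $u(y) > 0$; since $v$ has at least one such neighbour ($w$), the sum cannot vanish unless $v$ also has a neighbour $z$ with $u(z) < 0$. Such a $z$ lies in a strong domain $D'$ on which $u$ is negative, so $D' \neq D$, and therefore $D'$ belongs to a final island $I' \neq I$, because $I$ contains only the single domain $D$. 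Hence $v$ is adjacent to the two distinct final islands $I$ and $I'$, and $I$ is small.

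Now I invoke termination: when the merging process stops there is no node adjacent to exactly two different islands at least one of which is small. If $v$ were adjacent only to $I$ and $I'$, the merging step would still apply at $v$ (since $I \in \cS$), contradicting the fact that the process has ended. Therefore $v$ must be adjacent to at least three distinct islands, which is the claim.

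I expect the proof to be essentially this short. The only point that needs a little care is the sign bookkeeping in the eigenvalue equation — confirming that the diagonal entry $m_{vv}$ is harmless because it multiplies $u(v)=0$, and that ``$v$ adjacent to an island $I'$'' is genuinely witnessed by a neighbour of $v$ lying in a strong domain belonging to $I'$, not merely by a node on the boundary of $I'$. No appeal to $3$-connectivity, bounded degree, or the genus is needed here; the statement is purely combinatorial once the eigenfunction identity is available.
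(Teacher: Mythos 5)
Your proof is correct and follows essentially the same route as the paper: the eigenvalue identity $\Delta u(v)=\la_n u(v)=0$ forces a neighbor of opposite sign lying in a second island, and the termination condition of the island-merging process (no node adjacent to exactly two islands with one of them small) upgrades ``at least two'' to ``at least three.'' Your added sign bookkeeping with $m_{vv}u(v)=0$ and the explicit appeal to termination just spell out what the paper leaves implicit in ``by the definition of the islands.''
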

\begin{proof}
Let us look at a small island $I$. If $v$ is a node adjacent to $I$ then the function $u$ is non-zero at a neighbor of $v$. But since $\Delta u (v) = \la_n u(v) = 0$, there must be another neighbor of $v$ where $u$ is of the opposite sign. This other vertex cannot be in $I$ since $I$ consist of a single strong domain. Hence it must belong to a different island. Then, by the definition of the islands the node $v$ must be adjacent to at least three islands.
\end{proof}

Let $V_0 \subset V$ denote the set of nodes adjacent to at least one small island.
Let us consider now the $t$-dimensional real Euclidean vector space
$\R^\cD$ with the standard scalar product $(f,g) = \sum_i
f(D_i)g(D_i)$, and the $s$ dimensional subspace $W \leq \R^\cD$
consisting of functions that are constant on the domains of each island. For any node $v \in V_0$ let $\fii_v \in W$ denote the function defined by 
\[ \fii_v(D) = \frac{1}{t(I(D))}\sum_{w \in
  I(D)}m_{vw} u(w).\]  Here $I(D)$ denotes the island in which the domain $D$ lies. The function $\fii_v$ is made so that it is automatically constant on each island.  Since $\Delta u(v) =0$, each $\fii_v$ is orthogonal to the constant 1 function.

\begin{lemma}  The dimension of the subspace $W_0 = \langle \fii_v : v \in V_0 \rangle \leq W$ is at least 
\begin{enumerate}
\item $|\cS| / d$ where $d$ denotes the maximum degree of $G$,
\item $\frac{1}{6}(|\cS|  - 14(2g-2))$  if the $G$ is 3-connected and $g$ denotes the genus of $G$.
\end{enumerate}
\end{lemma}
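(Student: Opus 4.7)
The plan is to build a large set of linearly independent vectors $\fii_{v_i}$ by a greedy procedure and then bound its size by a counting argument tailored to each case.

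Greedy construction: iterate pairs $(v_i, I_i) \in V_0 \times \cS$. At step $i$, let $C_{i-1} = \{I \in \cS : I \sim v_j \text{ for some } j < i\}$ be the already-covered small islands; pick any $I_i \in \cS \setminus C_{i-1}$ (possible while this set is nonempty, because every small island has its boundary vertices in $V_0$) together with any $v_i \in V_0$ adjacent to $I_i$, and stop at step $k$ when $C_k = \cS$. The crucial fact is the lower-triangular nonzero pattern $\fii_{v_i}(D_{I_i}) \neq 0$ while $\fii_{v_j}(D_{I_i}) = 0$ for every $j < i$: the former because $v_i \sim I_i$ and $u$ has constant nonzero sign on the single domain $D_{I_i}$, so every term in $\fii_{v_i}(D_{I_i}) = \sum_{w \in I_i,\, w \sim v_i} m_{v_i w} u(w)$ shares a sign and the sum does not vanish; the latter because $I_i \notin C_{i-1}$ forces $v_j \not\sim I_i$. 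Evaluating $\sum c_i \fii_{v_i} = 0$ successively at $D_{I_k}, D_{I_{k-1}}, \ldots, D_{I_1}$ gives $c_i = 0$ for every $i$, whence $\dim W_0 \geq k$. Part (a) then follows at once: each $v_i$ has $\leq d$ neighbors and is therefore adjacent to at most $d$ small islands, so $|C_i \setminus C_{i-1}| \leq d$ and $k \geq |\cS|/d$.

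For part (b) the same greedy yields $\dim W_0 \geq k$, but one needs a sharper count of how many small islands are covered per step, and this is where the embedding in the genus-$g$ surface and 3-connectedness enter. Form the bipartite graph $B$ on $\{v_1,\ldots,v_k\} \cup \cS$ with edges given by $G$-adjacencies; contracting each island to a point in the embedding of $G$ and suppressing multi-edges realises $B$ as a simple bipartite graph embedded in the same surface. Bipartite girth $\geq 4$ together with Euler's formula then gives $|E(B)| \leq 2(k + |\cS|) + 4(g-1)$. The complementary degree input comes from two sources: (i) 3-connectedness of $G$, which forces $|\partial I| \geq 3$ for every small island $I$ (since $\partial I$ is a vertex cut of $G$ separating $I$ from the rest, and its vertices all lie in $V_0$), and (ii) the claim above, which gives every $v_i$ at least three adjacent islands. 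The plan is to combine these degree bounds with the Euler inequality to deduce a bound of the form $|\cS| \leq 6k + 14(2g-2)$ equivalent to the assertion.

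The main obstacle, and precisely where the constants $6$ and $14$ arise, is that the three $V_0$-boundary nodes of a given small island need not all lie inside the greedy set $\{v_1,\ldots,v_k\}$, so one cannot directly invoke $\deg_B(I) \geq 3$ from 3-connectedness. Closing this gap will require either a careful refinement of the greedy that secures several chosen neighbours per small island, or an exchange argument on a maximum linearly independent subset of $\{\fii_v : v \in V_0\}$; the large islands incident to $V_0$-nodes must be tracked at the same time, since they contribute to the vertex count of the embedded bipartite graph, and it is this bookkeeping that produces the concrete coefficients $6$ and $14(2g-2)$ in the final bound.
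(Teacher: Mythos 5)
Your independence argument and part (a) are exactly the paper's: the greedy choice of nodes $v_1,v_2,\dots$, each adjacent to a small island untouched by earlier choices, gives the triangular pattern $\fii_{v_i}(D_{I_i})\neq 0$, $\fii_{v_j}(D_{I_i})=0$ for $j<i$ (all nonzero terms of $\fii_{v_i}(D_{I_i})$ have the same sign since $m_{v_iw}<0$ and $u$ has constant sign on the single domain), and the $\leq d$ bound on islands consumed per step yields $|\cS|/d$. That much is correct and matches the paper.

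But part (b) has a genuine gap, which you yourself flag: your counting is done on the bipartite graph $B$ spanned by the \emph{chosen} vertices $\{v_1,\dots,v_k\}$ and $\cS$, and there the only degree bound you actually have on the island side is $\deg_B(I)\geq 1$ (each island is covered by some $v_i$), which combined with the Euler bound $|E(B)|\leq 2(k+|\cS|)+4(g-1)$ gives an inequality in the useless direction; the $\geq 3$ bound from 3-connectedness refers to all of $V_0$, and the ``adjacent to at least three islands'' claim may involve large islands not present in $B$, so neither rescues the count. The paper avoids this by reversing the roles and making the Euler argument dynamic rather than a single global count over the final greedy set: it forms the bipartite graph $H$ between \emph{all} of $V_0$ and the contracted small islands (where 3-connectedness legitimately gives every island degree $\geq 3$), and proves via an auxiliary construction (quadrangulate the faces of a minimal-genus embedding and pass to the graph $H_1$ on the island-vertices, whose faces correspond to $V_0$-vertices; if every such face had $\geq 7$ sides, Euler's formula with $e\geq 7f/2$ and $e\geq 3v/2$ forces $|\cS|\leq 14(2g-2)$) that whenever more than $14(2g-2)$ islands remain, some vertex of $V_0$ has $H$-degree at most $6$. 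The greedy then picks \emph{that} low-degree vertex, deletes it together with its at most $6$ islands (which preserves the degree-$\geq 3$ condition for the surviving islands and does not increase the genus), and repeats, losing at most $6$ islands per independent $\fii_v$ gained; this is precisely where the constants $6$ and $14(2g-2)$ come from, and it is the step your proposal leaves open.
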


\begin{proof}
Both parts are proved by successively picking nodes $v_1, v_2, \dots \in V_0$ with the property that for every $i$ the node $v_i$ is adjacent to a small island that was not adjacent to any previously picked node. If $I \in \cS$ and $v$ is a node adjacent to $I$ then $\fii_{v_i}(I) \neq 0$. Thus our process guarantees that all the $\fii_{v_i}$ are independent. 

For the first part the greedy algorithm generates a good sequence $v_1, v_2, \dots$. In each step we find a small island that is not adjacent to any of the previously selected nodes, and choose any adjacent node as the next $v_i$. This way the number of small islands we can choose from decreases at most by $d$, hence the sequence of $v_i$ will be of length at least $|\cS|/d$.

For the second part we use a similar greedy algorithm. The idea is that for a fixed genus there is always a vertex of degree at most six, unless the graph is very small. Let us contract each small island to a point by contracting the edges of an arbitrary spanning tree of the island. Denote the resulting set of points by $W = \{w_1,\dots, w_{|\cS|}\}$. Let us only keep the subgraph spanned by $V_0 \cup W$ and delete all loop and multiple edges and in general any edge not running between $V_0$ and $W$. This way we get a new bipartite graph $H$ that is still embedded in $\Sigma_g$. Since $G$ was 3-connected, this means that every small island had to have at least 3 adjacent nodes in $V_0$. In $H$ this simply means that the degree of each $w_i$ is at least 3. 
The proof of the following statement will be given below.

\begin{claim}\label{smalldegree} If $|W| > 14(2g-2)$ then there is a vertex $v \in V_0$ whose degree is at most 6.
\end{claim}

This is all we need for our greedy algorithm to work: if $|W| \leq 14(2g-2)$ there is nothing to prove. On the other hand if $|W| > 14(2g-2)$ then by the claim there is a vertex $v  \in V_0$ with small degree. Let us choose $v_1 = v$ and remove $v_1$ and all its neighbors from $H$. This cannot increase the genus of the graph. We repeat the process until the size of $W$ shrinks below $14(2g-2)$. In each step we lose at most 6 vertices from $W$ hence we get at least $\frac{1}{6}(|W|  - 14(2g-2))$ = $\frac{1}{6}(|\cS|  - 14(2g-2))$ independent $\fii_v$ functions, as stated.
\end{proof}

\begin{proof}[Proof of Claim~\ref{smalldegree}]
Take the minimal genus representation of $H$. Then every face has to be a disc. Since the graph is bipartite and has no multiple edges, each face is an even cycle of length at least 4. If it is longer, we can cut it into smaller faces of length exactly 4 by drawing some of the diagonals, and keeping the graph bipartite. Finally we can transform the graph in the following way: on each face connect the two vertices belonging to $W$ by a dotted diagonal. The dotted edges form a graph embedded in $\Sigma_g$ whose vertex set is $W$ and the faces correspond exactly to the vertices of $V_0$. Denote the new graph by $H_1$.

Assume every degree in $V_0$ is at least 7, that is, each face of $H_1$ has at least 7 sides. 
Hence for this graph $e \geq 7f/2$, and $e \geq 3v/2$ since each vertex has degree at least 3. Multipying the first bound by 4, the second by 10 and adding them up we get
\[ 14f +  15v \leq 14e = 14f + 14v + 14(2g-2),\] that is $|W| = v \leq 14(2g-2)$ and this completes the proof.
\end{proof}

\begin{claim}~\label{ybecsles} Recall that $t$ denoted the total number of strong domains. Let $y$ denote the codimension of $W_0$ in $W$. Then we have
\begin{enumerate}
\item $y \leq \frac{d-1}{d} t$ where $d$ denotes the maximum degree of $G$,
\item $y \leq \frac{5}{6}t + \frac{14}{6}(2g-2)$  if the $G$ is 3-connected and $g$ denotes the genus of $G$.
\end{enumerate}
\end{claim}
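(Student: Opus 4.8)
The plan is to relate the codimension $y$ of $W_0$ in $W$ to the number of strong domains by comparing three quantities: the dimension $s$ of $W$, the dimension of $W_0$ (lower-bounded by the previous Lemma), and the total count $t = \sum_I t(I)$. Recall that $\dim W = s$ is the number of islands, and $W_0 \leq W$, so $y = s - \dim W_0$. Hence $y = s - \dim W_0 \leq s - |\cS|/d$ for part (1), and $y \leq s - \frac{1}{6}(|\cS| - 14(2g-2))$ for part (2), using the two bounds from the Lemma. So it remains to bound $s$ in terms of $t$ and $|\cS|$, which is pure bookkeeping about the island structure.

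The key combinatorial observation is that $t$ splits as a sum over islands: $t = \sum_{I \in \cS} t(I) + \sum_{I \in \cL} t(I) = |\cS| + \sum_{I \in \cL} t(I)$, since each small island contributes exactly one strong domain. Every large island contains at least two strong domains, so $\sum_{I \in \cL} t(I) \geq 2|\cL|$, giving $|\cL| \leq \frac{1}{2}(t - |\cS|)$. Combining, $s = |\cS| + |\cL| \leq |\cS| + \frac{1}{2}(t - |\cS|) = \frac{1}{2}(t + |\cS|)$.

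Now substitute. For part (1): $y \leq s - |\cS|/d \leq \frac{1}{2}(t+|\cS|) - |\cS|/d = \frac{t}{2} + \left(\frac{1}{2} - \frac{1}{d}\right)|\cS|$. Since $|\cS| \leq t$ and the coefficient $\frac{1}{2} - \frac{1}{d} \geq 0$ (as $d \geq 2$), this is at most $\frac{t}{2} + \left(\frac{1}{2} - \frac{1}{d}\right)t = \left(1 - \frac{1}{d}\right)t = \frac{d-1}{d}t$, as claimed. For part (2): $y \leq \frac{1}{2}(t+|\cS|) - \frac{1}{6}(|\cS| - 14(2g-2)) = \frac{t}{2} + \frac{|\cS|}{3} + \frac{14}{6}(2g-2)$, and bounding $|\cS| \leq t$ gives $y \leq \frac{t}{2} + \frac{t}{3} + \frac{14}{6}(2g-2) = \frac{5}{6}t + \frac{14}{6}(2g-2)$, which is exactly the desired inequality.

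I do not expect a genuine obstacle here; the only thing to be careful about is the direction of the bound on $|\cS|$ — we need $|\cS| \leq t$ (trivially true) and must check that the coefficient multiplying $|\cS|$ is nonnegative before replacing $|\cS|$ by $t$, which holds in both parts. One should also double-check the edge cases where $\cL$ or $\cS$ is empty, but the inequalities $|\cL| \leq \frac{1}{2}(t-|\cS|)$ and $\dim W_0 \geq$ (the Lemma's bounds) remain valid (possibly vacuously) in those cases, so nothing special is needed.
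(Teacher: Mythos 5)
Your proof is correct and follows essentially the same route as the paper: both use $\dim W = |\cS| + |\cL|$, the Lemma's lower bounds on $\dim W_0$, and the inequality $t \geq 2|\cL| + |\cS|$, differing only in how the algebra is rearranged (you bound $|\cL| \leq \tfrac{1}{2}(t - |\cS|)$ and then $|\cS| \leq t$, while the paper inflates the coefficient of $|\cL|$ directly). Nothing further is needed.
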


\begin{proof}
Notice that each large island contains at least two strong domains, hence $t \geq 2|\cL| + |\cS|$. On the other hand by the lemma in case a) we have $y = |\cL| + |\cS| - \dim W_0 \leq 2\frac{d-1}{d}|\cL| + \frac{d-1}{d}|\cS| \leq \frac{d-1}{d} t$, and case b) is entirely analogous. 
\end{proof}

\begin{defin}
Let $\psi_1, \dots, \psi_y $ denote a basis of the orthogonal complement of $W_0$ in $W$. 
\end{defin}

\section{Proof of Theorem~\ref{nodaldomainthm}}

We use the notation from the previous section. Let $w_i : V \to \R$ be defined by 
\[ w_i(v) = \left \{ 
\begin{array}{rl}
u(v) & \mbox{ if $v \in D_i$} \\
0 & \mbox{ otherwise.}
\end{array} \right.
\]

Let us define $f = \sum c_i w_i$. Suppose we can choose the
coefficients such that $(f,f) = 1$ and $f$ is orthogonal to
$u^{(1)},\dots, u^{(n-1)}$, furthermore the function $c: \cD \to \R$
is orthogonal to $\psi_1,\dots, \psi_y$. We will follow closely the
approach of \cite{Davies} to show that these constraints imply that
all the $c_i$'s are equal to zero, which is a contradiction. The proof
goes in three steps. First we show that the $c_i$'s are constant in
each region, then in each island. Finally using orthogonality to the
$\psi_i$'s we get all the $c_i$'s are zero. The first step is
explicitly, the second is implicitly contained in \cite{Davies}, but
we repeat the arguments here to remain self-contained.

\begin{lemma} If $f = \sum c_i w_i$ is orthogonal to
  $u^{(1)},\dots,u^{(n-1)}$ then $\Delta f = \la_n f$, and for any two
  adjacent strong domains $D_i,D_j$ we have $c_i = c_j$.  
\end{lemma}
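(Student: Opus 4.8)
The plan is to mimic the Rayleigh-quotient argument of Davies et al. First I would show that $\Delta f = \la_n f$. Since each $w_i$ is supported on a single strong domain $D_i$ on which $u$ has a constant sign, and $f$ is by hypothesis orthogonal to $u^{(1)},\dots,u^{(n-1)}$, the min-max characterization of eigenvalues gives $(\Delta f, f) \geq \la_n (f,f)$. On the other hand, I want to bound the Rayleigh quotient from above by $\la_n$. The standard trick is to estimate $(\Delta f, f)$ termwise using $|c_i c_j| \le \tfrac12(c_i^2 + c_j^2)$ together with the fact that $u$ itself satisfies $\Delta u = \la_n u$; because the off-diagonal entries $m_{vw}$ are negative exactly on edges, the sign of $u$ being constant on each $D_i$ makes the cross terms between a domain and its complement work in our favor. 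Carrying this out, one gets $(\Delta f, f) \le \la_n (f,f)$, so equality holds throughout and $f$ is a genuine $\la_n$-eigenfunction.

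Having equality in the Rayleigh quotient, I would then revisit the termwise inequalities and extract rigidity: equality forces $c_i = c_j$ whenever the inequality $|c_i c_j| \le \tfrac12(c_i^2+c_j^2)$ was used with a strictly negative coefficient $m_{v_i v_j}$, i.e.\ whenever there is an edge $v_i \sim v_j$ with $v_i \in D_i$, $v_j \in D_j$. That is precisely the condition that $D_i$ and $D_j$ are adjacent strong domains, which is what the lemma asserts. One subtlety to check carefully is the bookkeeping of boundary vertices: a node lying between $D_i$ and $D_j$ contributes to $\Delta f$ via terms $m_{vw}c_i u(w)$ with $w$ ranging over neighbors of $v$, and I must make sure the comparison with $\la_n(f,f)$ is organized so that every ``mixed'' edge of $G$ (one endpoint in a domain, the other in a different domain or a node) is accounted for with the correct sign. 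Writing $(\Delta f, f)$ as a sum over ordered pairs, substituting $f = \sum c_i w_i$, and then reassembling using $\Delta u = \la_n u$ on the non-node vertices is the cleanest route.

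The main obstacle I anticipate is precisely this sign/rigidity bookkeeping rather than any deep idea: one must phrase the Rayleigh-quotient comparison in exactly the right form so that (a) the inequality direction is correct given that $m_{vw}<0$ on edges, and (b) the equality case isolates adjacent-domain pairs and nothing else. In particular the diagonal entries $m_{vv}$ are arbitrary, so the argument must be arranged so these cancel out of the comparison (they do, since $f$ and $u$ agree on the support of $f$ up to the scalars $c_i$, and the diagonal contributes $\sum_i c_i^2 m_{v_iv_i}u(v_i)^2/\cdots$ identically on both sides). Once the equality case is correctly extracted, the conclusion $c_i = c_j$ for adjacent $D_i, D_j$ is immediate, and this is exactly the ``first step'' promised in the outline above — the $c_i$ are constant on each region.
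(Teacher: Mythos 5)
Your plan is correct and follows essentially the same route as the paper: a min--max lower bound $(f,\Delta f)\geq \lambda_n(f,f)$ combined with an upper bound obtained from the positivity of $m_{xy}u(x)u(y)$ on edges joining distinct (hence opposite-signed) domains, with the equality case forcing $c_i=c_j$ for adjacent domains. The only difference is presentational: the paper packages your termwise bookkeeping (including the cancellation of the diagonal entries and the vanishing of terms at nodes) into the Duval--Reiner identity $(f,Af)=\sum_i c_i^2(w_i,Au)-\tfrac12\sum_{i,j}(c_i-c_j)^2(w_i,Aw_j)$ applied with $A=\Delta-\lambda_n I$.
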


\begin{proof} We use Duval and Reiner's \cite{Duval} formula, which can be verified by straightforward computation. For any self-adjoint operator $A$:

\[ (f,Af) = \sum_{i=1}^t c_i^2(w_i, Au) - \frac{1}{2}\sum_{i,j = 1}^t (c_i - c_j)^2 (w_i, A w_j).\]

If we choose $A = \Delta - \la_n I$ then since $(f,f) = 1,Au = 0$ and for $i \neq j$ the product $(w_i,A w_j) = (w_i, \Delta w_j)$ we get

\[ (f,\Delta f) - \la = -\frac{1}{2} \sum_{i,j = 1}^t (c_i - c_j)^2 (w_i, \Delta w_j).\]

It is easy to see that $(w_i, \Delta w_j) = 0$ if $D_i$ and $D_j$ are
not adjacent. If they are, then $w_i$ and $w_j$ have different signs,
hence each non-zero term in $(w_i, \Delta w_j)$ is a product of a
positive and two negative numbers. 

So we have $(f, \Delta f) \leq \la_n$. On the other hand by the
well-known min-max principle $(f, \Delta f) \geq \la_n (f,f)$  if $f$
is orthogonal to the first $n-1$ eigenfunctions. Hence in our case $\la_n
\leq (f,\Delta f) \leq \la_n$. This implies by the same min-max
principle that $\Delta f = \la_n f$. On the other hand it also implies
that $(c_i - c_j)^2 (w_i, \Delta w_j) = 0$ for all $i,j$. If $D_i$ and
$D_j$ are adjacent, the argument above shows that in fact $(w_i,
\Delta w_j) > 0$, so we must have $c_i = c_j$. This completes the proof.
\end{proof}

\begin{lemma} The $c_i$'s are constant in each island.
\end{lemma}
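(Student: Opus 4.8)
The plan is to induct along the recursive construction of the islands. Before any merging step the islands are exactly the nodal regions, and by the previous lemma adjacent strong domains share the same coefficient; since a region is a connected component of the domain-adjacency graph, $c$ is constant on each region, so the statement holds at the start of the recursion. Hence it suffices to check that constancy of $c$ on every current island is preserved by a single merging step.

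So I would assume that $c$ is constant on each current island, and consider the step in which two current islands $I$ and $J$ are merged across a node $v$ that is adjacent to exactly these two islands, at least one of which, say $I$, is small — that is, $I$ is a single strong domain. Let $c_I$ and $c_J$ be the constant values of $c$ on $I$ and $J$; the goal is $c_I=c_J$. The two identities I would use are $\Delta u(v)=0$, the eigenvalue equation for $u$ at the node $v$, and $\Delta f(v)=0$, which holds because $f=\sum c_i w_i$ vanishes at every node (no node lies in a strong domain) while $\Delta f=\la_n f$ by the previous lemma. Expanding $\sum_w m_{vw}u(w)=0$ and $\sum_w m_{vw}f(w)=0$ over the neighbours of $v$: every non-node neighbour of $v$ lies in some strong domain, hence in some current island, and since $v$ is adjacent only to $I$ and $J$ all such neighbours lie in $I\cup J$, while node-neighbours contribute nothing since $u$ (and hence $f$) vanishes there. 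Writing $S_I=\sum_{w\sim v,\ w\in I}m_{vw}u(w)$ and $S_J=\sum_{w\sim v,\ w\in J}m_{vw}u(w)$, the equation for $u$ gives $S_I+S_J=0$ and the equation for $f$ gives $c_IS_I+c_JS_J=0$; eliminating $S_J$ yields $(c_I-c_J)S_I=0$. It then remains to note that $S_I\neq 0$: since $I$ is a single strong domain $u$ has constant sign on $I$, every $m_{vw}$ with $w\sim v$ is negative, and $v$ has at least one neighbour in $I$ because it is adjacent to $I$, so $S_I$ is a nonempty sum of nonzero terms of one sign. Hence $c_I=c_J$, the merged island carries a single constant value, and the inductive hypothesis is restored.

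The only points that seem to need care are bookkeeping ones: that in the \emph{current} island structure every non-node neighbour of $v$ indeed lies in one of the two islands being merged — which is exactly the meaning of ``$v$ is adjacent to exactly two islands'' at that stage — and that the hypothesis ``at least one of the two islands is small'' is genuinely used, since it is what guarantees that the partial sum $S_I$ over that island cannot cancel. I do not anticipate any real obstacle beyond phrasing these two observations cleanly; the rest is a two-line linear elimination.
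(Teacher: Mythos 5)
Your proposal is correct and follows essentially the same route as the paper: induction along the island-merging process, base case from the previous lemma (constancy on regions), and at each merge the two linear relations $S_I+S_J=0$ and $c_IS_I+c_JS_J=0$ at the merging node, with the smallness of one island guaranteeing the corresponding sum is nonzero. Your explicit justification that $S_I\neq 0$ (constant sign of $u$ on a single strong domain and negativity of the off-diagonal $m_{vw}$) is exactly the point the paper leaves implicit.
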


\begin{proof} By the previous lemma we see that the $c_i$'s are
  constant in each region. We prove this lemma recursively as the
  islands were formed. At the beginning of the process each region is
  an island, hence the statement is true. The only thing we have to
  check is whenever two islands are merged into a larger island, the
  statement remains true. So lets consider a particular step of the
  process when two islands $I,J$ are merged into one large island. By
  induction we know that $c$ is constant on $I$ and on $J$. By the
  definition of the island forming process, at this time there must be
  a node $v$ which is adjacent to only these two islands. We know by
  the previous lemma, that $\Delta f = f$ and $\Delta u = u$. Let us
  write down what this precisely means for the node $v$.  Let
\[A = \sum_{x \in I} m_{vx} u(x); \qquad B = \sum_{x \in J} m_{vx}u(x).\] Since $u(v) = f(v) = 0$ we get regardless of the
value of $\la_n$ that
\[ c_I A + c_J B = (\Delta f)(x) = 0 = (\Delta u)(x)
= A + B.\]
 Since either $I$ or $J$ had to be a small island at this step of the process, either $A$ or $B$ has to be non-zero. But this implies the other being non-zero as well, and simple computation shows that this implies $c_I = c_J$.

We have showed that in each step when two islands are united, the
function $c$ remains constant in each island, hence this holds at the
end as well.
\end{proof}

We have shown that if we regard the coefficients $c_i$ as a function
$c: \cD \to \R$ then actually $c \in W$.

\begin{lemma} $c$ is orthogonal to $W_0$.
\end{lemma}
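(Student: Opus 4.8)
The plan is to prove the equivalent statement that $(c,\fii_v)=0$ for every node $v \in V_0$; since the functions $\fii_v$ span $W_0$ by definition, this is exactly orthogonality of $c$ to $W_0$.

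First I would simply expand the scalar product using the definition of $\fii_v$:
\[ (c,\fii_v) \;=\; \sum_{D \in \cD} c(D)\,\fii_v(D) \;=\; \sum_{D \in \cD} \frac{c(D)}{t(I(D))}\sum_{w \in I(D)} m_{vw}\,u(w). \]
Then I would invoke the previous lemma: $c$ is constant on each island. Writing $c_I$ for this common value and reorganizing the outer sum according to which island each domain belongs to — each island $I$ contains exactly $t(I)$ domains, and the inner quantity $\sum_{w\in I} m_{vw}u(w)$ depends only on $I$ — the weights $1/t(I)$ cancel and one is left with
\[ (c,\fii_v) \;=\; \sum_{I} c_I \sum_{w \in I} m_{vw}\,u(w) \;=\; \sum_{w \in V} m_{vw}\,f(w). \]
The last equality holds because $f(w)=c_{I(w)}u(w)$ for $w$ lying in a strong domain, while $f$ vanishes at every node, so the node terms contribute nothing to either side.

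Finally I would recognize $\sum_w m_{vw} f(w)$ as $(\Delta f)(v)$. By the first lemma of this section $\Delta f = \la_n f$, and $v$ is a node so $f(v)=0$; hence $(\Delta f)(v) = \la_n f(v) = 0$, giving $(c,\fii_v)=0$ for all $v \in V_0$. I do not anticipate any real obstacle here — the one place to be careful is the middle step, checking that the $1/t(I)$ factors cancel exactly when the sum over domains is regrouped into a sum over islands, and that the contributions of the nodes drop out consistently on both sides of the identity. Combined with the hypothesis that $c \perp \psi_1,\dots,\psi_y$ (so that $c \in W_0$), this forces $c=0$, hence all $c_i=0$, which is the desired contradiction completing the proof of Theorem~\ref{nodaldomainthm}.
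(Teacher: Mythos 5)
Your proposal is correct and is essentially the paper's own argument: both reduce $(c,\fii_v)$, via the definition of $\fii_v$ and the constancy of $c$ on islands (which makes the $1/t(I)$ weights cancel), to $\sum_w m_{vw}f(w)=(\Delta f)(v)=\la_n f(v)=0$ at the node $v$. The paper merely runs the same chain of equalities in the opposite direction, starting from $0=\sum_x m_{vx}f(x)$ and regrouping by the islands adjacent to $v$.
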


\begin{proof} Let $v\in V_0$ be a node (which is by definition adjacent to at least one small island). Let $J_1,\dots, J_p$ denote all the islands adjacent to $v$, and for each $j$ let 
\[A(j) = \sum_{x \in I_j} m_{vx}u(x) = \sum_{D \in J_j} \phi_v(D) \] where $D$ runs over all strong domains in the island $J_j$. The second equation holds by the definition of  $\fii_v$. Let us temporarily denote by $c(J)=c(D)$ the value of $c$ on any domain $D \in J$. We may do this, since $c$ is known to be constant on each island. Now similarly to the previous lemma we have 
\[ 0 = \sum_{x} m_{vx} f(x) = \sum_{j=1}^p c(J_j)A(j) = \sum_{j=1}^p \sum_{D \in J_j} c(D)\fii_v(D) = \sum_{i=1}^t c_i \fii_v(D_i).\]  As this holds for every $v \in V_0$, hence for every $\fii_v$ spanning $W_0$, we have the desired orthogonality.
\end{proof}

Since $c$ is also orthgonal to $\psi_1,\dots, \psi_y$ which is the
orthogonal complement of $W_0$ in $W$, this means that $c$ is
orthogonal to $W$. This together with $c \in W$ implies that $c = 0$,
contradicting our assumption.

Hence the $n+y-1$ orthogonality conditions imply all the $c_i$'s are zero, hence the number of strong domains $t$ is at most $n+y-1$. Using Claim~\ref{ybecsles} simple computation shows that in case a) we get $t \leq d(n-1)$ while $t \leq 6(n-1) + 14(2g-2)$ follows in case b). This completes the proof of the theorem.

\section{Proof of Theorem~\ref{multiplicityboundthm}}

Let $g$ denote the genus of $G$, and let us fix an embedding of $G$ into
$\Sigma_g$, the closed oriented surface of genus $g$. Let us fix the
$n$-th eigenvalue of the Laplacian $\la = \la_n$, and assume that it
has multiplicity $r$. This means there are $r$ linearly independent
eigenfunctions $f_1,\dots, f_r$ for $\la$. Combining these functions
we will try to create an eigenfunction which has many strong nodal
domains. 

First of all pick a set of $r$ vertices $R = \{v_1,\dots,v_r\}$ which
exhibit the independence of the functions $f_1,\dots, f_r$. Next
choose a connected subgraph $W' \subset V$ of size $|W'| = r/2$. The $W'$
and $R$ sets may overlap.

\begin{claim}
There is a linear combination $u = \sum a_i f_i$ that vanishes on
$W'$ but is non-zero on at least half of $R$.
\end{claim}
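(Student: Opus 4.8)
The plan is to read the whole statement as a dimension count in the $r$-dimensional coefficient space $\R^r = \{a=(a_1,\dots,a_r)\}$, via the map $a \mapsto u_a := \sum_i a_i f_i$. This map is injective because $f_1,\dots,f_r$ are linearly independent, so $\R^r$ is identified with the space of $\la$-eigenfunctions. The condition ``$u_a$ vanishes on $W'$'' is a system of $|W'| = r/2$ linear equations in $a$, so the set $U \le \R^r$ of its solutions has $\dim U \ge r - r/2 = r/2$. All that remains is to exhibit a single $a \in U$ for which $u_a$ is nonzero at at least half of the vertices of $R$. Here I would use the defining property of $R$: the $r\times r$ matrix $\big(f_i(v_j)\big)_{i,j}$ is invertible — this is exactly what it means that $R$ ``exhibits the independence'' of the $f_i$ — so the evaluation map $E:\R^r \to \R^R$, $a \mapsto \big(u_a(v_1),\dots,u_a(v_r)\big)$, is a linear isomorphism, and $S := E(U) \le \R^R \cong \R^r$ is a subspace with $\dim S = \dim U \ge r/2$.

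The one point needing a short separate argument is the elementary linear-algebra fact that a subspace $S \le \R^r$ with $\dim S \ge r/2$ must contain a vector having at least $r/2$ nonzero coordinates. To see this, let $D = \{\, j : x_j = 0 \text{ for all } x \in S\,\}$ be the set of ``dead'' coordinates. Then $S$ is contained in the coordinate subspace indexed by the complement of $D$, hence $\dim S \le r - |D|$, so $|D| \le r/2$. For each $j \notin D$ the subset $\{x \in S : x_j = 0\}$ is a proper subspace of $S$, and since $\R$ is infinite, $S$ is not the union of these finitely many proper subspaces; therefore some $x \in S$ has $x_j \ne 0$ for every $j \notin D$, i.e.\ $x$ has at least $r - |D| \ge r/2$ nonzero coordinates. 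Pulling such an $x$ back through the isomorphism $E$ gives $a \in U$, and the eigenfunction $u = u_a = \sum a_i f_i$ then vanishes on $W'$ (because $a \in U$) while $u(v_j) = x_j \ne 0$ for at least $r/2$ of the $v_j \in R$, which is the claim.

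I do not expect a genuine obstacle: the content is purely linear algebra, and the only thing to watch is bookkeeping when $r$ is odd (read $r/2$ as $\lfloor r/2\rfloor$ or $\lceil r/2\rceil$ consistently — the statement only asserts ``at least half''), together with the small observation, used in the middle step, that a vector space over an infinite field is never a finite union of proper subspaces; that is precisely what lets us force all coordinates outside $D$ to be nonzero simultaneously rather than one at a time.
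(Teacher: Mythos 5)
Your proof is correct and follows essentially the same route as the paper: a dimension count showing the eigenfunctions vanishing on $W'$ form a subspace of dimension at least $r/2$, the invertibility of the evaluation matrix on $R$, and the fact that a real vector space cannot be covered by finitely many proper subspaces. The only cosmetic difference is that the paper runs this as a contradiction using the subspaces vanishing on fixed $(r/2+1)$-element subsets of $R$, while you use the ``dead coordinate'' bookkeeping and the hyperplanes $\{x \in S : x_j = 0\}$; the underlying argument is the same.
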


\begin{proof} Those eigenfunctions that vanish on $W'$ constitute an
  $r/2$ dimensional linear subspace of $\langle f_1,\dots, f_r
  \rangle$. Suppose that each of these functions vanishes on more than
  $r/2$ points of $R$. The set of eigenfunctions that vanish on a
  fixed vertex set of size $r/2+1$ is an $r/2-1$ dimensional subspace
  of $\langle f_1,\dots, f_r \rangle$. Hence we could cover an $r/2$
  dimensional space with finitely many $r/2-1$ dimensional ones, which
  is clearly impossible. Hence the desired linear combination exists.
\end{proof}

Let $W \subset V$ denote the connected component of nodes of $u$ that
contains $W'$. Let $Z = \partial (V \setminus W)$ the inner
vertex-boundary of $W$. 

\begin{claim}\label{zbound} $|Z| \geq \sqrt{r/2}-1$.
\end{claim}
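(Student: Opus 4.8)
The plan is to obtain the bound purely from the volume-growth hypothesis $VG$, applied to whichever of $W$, $V\setminus W$ is at most half of $V$; the only input from the construction of $u$ is that both of these sets are reasonably large.

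I would begin by recording the two size estimates. Since $W'\subseteq W$ we have $|W|\ge |W'| = r/2$. Conversely, by the previous claim $u$ is nonzero on at least $r/2$ vertices of $R$; a vertex where $u$ does not vanish is not a node, hence does not belong to $W$, so $|V\setminus W|\ge r/2$ as well (and in particular $|V|\ge r$). At least one of the following therefore holds: $|V\setminus W|\le |V|/2$, or $|W|\le |V|/2$.

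In the first case apply $VG$ to $D = V\setminus W$. Its outer vertex-boundary $\partial D$ is by definition exactly the set of vertices of $W$ adjacent to $V\setminus W$, i.e.\ $\partial D = Z$. Hence $|Z| = |\partial D| \ge \sqrt{|D|} = \sqrt{|V\setminus W|}\ge \sqrt{r/2}$, which is stronger than the claim.

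The second case, $|W|\le |V|/2$, is the one that needs a small idea, since $VG$ controls the \emph{outer} boundary of $W$ whereas $Z$ is its \emph{inner} boundary. I would pass to the interior $W^{\circ} := W\setminus Z$, the set of vertices of $W$ all of whose neighbours lie in $W$. If $W^{\circ}=\emptyset$ then $Z = W$ and $|Z| = |W|\ge r/2\ge \sqrt{r/2}$ (for $r\ge 2$; the claim is vacuous otherwise). If $W^{\circ}\neq\emptyset$, then $|W^{\circ}|\le |W|\le |V|/2$, so $VG$ gives $|\partial W^{\circ}|\ge \sqrt{|W^{\circ}|}$. Every vertex adjacent to a vertex of $W^{\circ}$ lies in $W$ by definition of $W^{\circ}$, and if it lies outside $W^{\circ}$ it lies in $Z$; thus $\partial W^{\circ}\subseteq Z$. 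Combining this with $|W^{\circ}| = |W| - |Z|\ge r/2 - |Z|$ gives $|Z|^{2} + |Z|\ge r/2$, and solving this quadratic inequality yields $|Z|\ge \tfrac12\bigl(\sqrt{1+2r}-1\bigr)\ge \sqrt{r/2}-1$, as desired. The interior trick in this last case is the only non-routine step, and it is also what produces the harmless ``$-1$''; the rest is bookkeeping with the two isoperimetric estimates.
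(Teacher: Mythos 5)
Your proof is correct and follows essentially the same route as the paper: the same case split on whether $W$ or $V\setminus W$ occupies at most half of $V$, applying $VG$ to $V\setminus W$ in one case and to $W\setminus Z$ in the other, and solving the resulting quadratic inequality. Your version is in fact slightly more careful, making explicit why $|V\setminus W|\ge r/2$ and why $\partial(W\setminus Z)\subseteq Z$, which the paper leaves implicit.
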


\begin{proof}
Either $W$ or $V \setminus W$ contains at most half of all the
vertices. In the second case by the volume-growth property $|\partial
(V\setminus W)| \geq \sqrt{|V \setminus W|} \geq \sqrt{r/2}$.
In the first case we apply the growth estimate to $W \setminus
Z$. Obviously $\partial (W \setminus Z) = Z$, hence $|Z|^2 \geq |W| -
|Z| \geq r/2 - |Z|$. From this we get $(|Z|+1)^2 > |Z|^2 + |Z| \geq
r/2$ and the claim follows.
\end{proof}

Each vertex in $Z$ is adjacent to a non-node of $u$, hence it has to
be adjacent to at least a positive and a negative vertex.

Let us consider $G^*$, the dual graph of $G$ on $\Sigma_g$. On each
face of $G^*$, let us record the sign of $u$, whether it is plus,
minus or zero. 

Let us remove each edge from $G^*$ that has the same sign recorded on
its two sides. Any time we find a vertex of degree two, let us replace
the two edges with a single edge, thereby removing the vertex. If we
find isolated or degree one vertices, let us remove those too. It is
clear, that after this process each face of the remaining graph
corresponds to a strong domain of $u$ or to a connected group of nodes
of $u$. In particular there is the face corresponding to the nodes in
$W$. By the construction this face now has at least $|Z|$ sides and
$|Z|$ vertices. This is because if we trace the boundary of this
region from the outside, we encounter at least $|Z|$ sign-changes, one
at each vertex of $Z$. 

Next we remove all the faces that correspond to nodes if $u$. If such
a face is a $p$-gon, then we contract it to a single vertex which will
have degree at least $p$. If the face had more than one boundary component,
then we remove the face from $\Sigma$, glue a disc to each boundary
component, and then contract each of these new faces to single
vertices as above. This step might disconnect the surface or decrease
its genus, but that will only be to our advantage. If in this process any vertices of degree 2 were created, we remove
them as above. 

Let us see what remains: each face now corresponds to precisely one
strong domain of $u$. Since adjacent domains have opposite sign, this
means that every vertex of the remaining graph has an even degree,
which cannot be 2, hence each degree is at least 4. There is one
special vertex that has degree at least $|Z|$. (This came from
contracting our distinguished face.) The graph is drawn on a disjoint
union of surfaces whose total genus is at most $g$. By connecting the
surface-components with small tubes we can get a single surface
$\Sigma'$ of genus at most $g$ in which the graph is embedded. Euler's
formula now says that $e \leq 2g-2 + f +v$ where $e$ is the number of
edges, $f$ the number of faces and $v$ the number of vertices. On the
other hand $e \geq (|Z| + 4(v-1))/2$ by simple counting.
Putting this together we get
\[ f \geq 2-2g + v + |Z| - 2 \geq |Z| + 1 - 2g.\]
On the other hand from Theorem~\ref{nodaldomainthm} we know that $f
\leq 6(n-1) + 14(2g-2)$. Hence by Claim~\ref{zbound} we get $\sqrt{r/2} -
1 \leq |Z| \leq 6(n-1) + 15(2g - 2) -1$, from which $r \leq 2\left[
  6(n-1) + 15(2g-2) \right]^2$, exactly what we had to prove.

\end{document}